\theoremstyle{plain}
\numberwithin{equation}{section}
\newtheorem{thm}{Theorem}[section]
\newtheorem{theorem}[thm]{Theorem}
\newtheorem{lemma}[thm]{Lemma}
\newtheorem{remark}[thm]{Remark}
\def\R{\mathbb{R}}
\def\N{\mathbb{N}}
\def\fa{\forall}
\def\Lin{\mathcal{L}}
\def\Meas{\mathcal{M}}
\author[Joshua M. Siktar]{Joshua M. Siktar$^1$}
\address{$^1$Department of Mathematical Sciences,
\newline \indent Carnegie Mellon University,
\newline \indent Pittsburgh, PA 15213, United States}
\email{jsiktar@alumni.cmu.edu}
\keywords{Parseval's Identity, Fourier Series, Bessel's Inequality}
\title{Recasting the Proof of Parseval's Identity}
\date{\today}							% Activate to display a given date or no date
\begin{document}

\begin{abstract}
We generalize aspects of Fourier Analysis from intervals on $\R$ to bounded and measurable subsets of $\R^n$. In doing so, we obtain a few interesting results. The first is a new proof of the famous Integral Cauchy-Schwarz Inequality. The second is a restatement of Parseval's Identity that doubles as a representation of integrating bounded and measurable functions over bounded and measurable subsets of $\R^n$. Finally, we apply these first two results to develop some sufficient criteria for additional integral inequalities that are elementary in nature.
\end{abstract}

\maketitle

\tableofcontents

\section{Introduction and Motivation} \label{intro}

In a typical first study of partial differential equations, great attention is devoted to Fourier Analysis, namely to the derivation of formulas for Fourier Coefficients and analyzing when a function has a Fourier Expansion over some subset of the real line. This is evident upon inspecting textbooks such as \cite{Wei, Za}. Further analysis of Fourier Coefficients over intervals in $\R$ is discussed in \cite{Ap, Liu, Sik}\footnote{Also see the unpublished article "Fun with Fourier Series" \url{https://arxiv.org/pdf/0806.0150.pdf} by R. Baillie}\footnote{Also see the unpublished article "Approximations for Apery's Constant $\zeta(3)$ and Rational Series Representations Involving $\zeta(2n)$," \url{https://arxiv.org/pdf/1605.09541.pdf} by C. Lupu and D. Orr}, and from these it is evident that Fourier Coefficients have use in mathematics beyond analysis, such as in number theory. 

However, in introductory texts the emphasis is usually on Fourier Coefficients of a function over an interval. If we use the fundamental tools of measure theory, from books such as \cite{Rie, Roy}, then we not only obtain results from Fourier Analysis in more generality, but also find a new proof of the Integral Cauchy-Schwarz Inequality. From here on, we will assume for convenience that we are over the standard Lebesgue Measure space in $\R^n$, denoted $(\R^n, \Meas, \Lin^n)$. 

Section \ref{integralCSProof} will be devoted to demonstrating the aforementioned new proof. Then, Section \ref{parsevalGen} will prove a special case of Parseval's Identity (which is discussed in \cite{Sik}) using the proof from \ref{integralCSProof} and tools from \cite{Roy}. In particular, the following lemma will be used repeatedly in Section \ref{parsevalGen}:

\vspace{0.2 cm}

\begin{lemma}[Countable Additivity of Integration]
Let $f$ be a measurable function defined on the measurable set $E \subset \R^n$. Let $\{E_n\}^{\infty}_{n = 1}$ be a disjoint, countable collection of measurable subsets of $E$ whose union is $E$. Then

\begin{equation}
\int_E f d\mu \ = \ \sum^{\infty}_{n = 1}\int_{E_n}f d\mu.
\label{countableIntegralSum}
\end{equation}

\end{lemma}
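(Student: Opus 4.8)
The plan is to reduce the countable statement to finite additivity and then invoke a convergence theorem to pass to the limit. First I would record the pointwise decomposition of characteristic functions: since $\{E_n\}^{\infty}_{n=1}$ is a disjoint collection whose union is $E$, we have $\chi_E = \sum^{\infty}_{n=1}\chi_{E_n}$ pointwise on $E$, and in particular $\chi_{\bigcup_{n=1}^{N}E_n} = \sum^{N}_{n=1}\chi_{E_n}$ for each finite $N$. This immediately yields finite additivity: by linearity of the integral,
\[
\int_{\bigcup_{n=1}^{N}E_n} f \, d\mu \ = \ \int_E f\Big(\sum^{N}_{n=1}\chi_{E_n}\Big)\, d\mu \ = \ \sum^{N}_{n=1}\int_{E_n} f \, d\mu.
\]
Thus the finite version of \eqref{countableIntegralSum} is essentially a restatement of the additivity of the integral over disjoint pieces, and requires no machinery beyond linearity.

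Second, I would introduce the sequence of partial sums $s_N := f\cdot\chi_{\bigcup_{n=1}^{N}E_n}$ and observe that $s_N \to f$ pointwise on $E$ as $N \to \infty$, since every point of $E$ lies in exactly one $E_n$ and is therefore captured by $s_N$ for all sufficiently large $N$. The crux of the argument---and the step I expect to be the main obstacle---is justifying the interchange of the limit in $N$ with the integral over $E$, that is, establishing that $\lim_{N\to\infty}\int_E s_N \, d\mu = \int_E f \, d\mu$. The finite additivity above rewrites the left-hand side as the partial sums $\sum_{n=1}^{N}\int_{E_n} f \, d\mu$, so the entire lemma hinges on this single limit-integral exchange.

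Third, to overcome this I would distinguish cases according to the sign of $f$. If $f \geq 0$, then the $s_N$ form a monotonically increasing sequence of nonnegative measurable functions converging to $f$, so the Monotone Convergence Theorem (as found in \cite{Roy}) applies directly and delivers the claimed equality. For a general measurable $f$ I would decompose $f = f^+ - f^-$ into positive and negative parts, apply the nonnegative case to each, and recombine using the linearity already established---provided $f$ is integrable so that $\int_E f^+ \, d\mu - \int_E f^- \, d\mu$ is well defined. In the setting of this paper, where $f$ is bounded and $E$ has finite measure, integrability is automatic and the decomposition causes no difficulty; alternatively, one may bypass the case split entirely by applying the Dominated Convergence Theorem to $s_N$ with the fixed dominating function $|f|$.
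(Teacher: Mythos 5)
Your proof is correct, but there is nothing in the paper to compare it against: the lemma is stated in the introduction as a background fact imported from Royden--Fitzpatrick \cite{Roy}, and the paper never proves it. Your argument---finite additivity via the pointwise identity $\chi_E = \sum^{\infty}_{n=1}\chi_{E_n}$ and linearity, then the limit-integral exchange for $s_N = f\cdot\chi_{\cup_{n=1}^{N}E_n}$ handled by Monotone Convergence when $f \geq 0$ and by the splitting $f = f^+ - f^-$ (or, more cleanly, Dominated Convergence with dominating function $|f|$) in general---is essentially the standard proof found in that reference, so you have supplied exactly the argument the paper delegates to its citation. One observation of yours is worth emphasizing: as stated, the lemma assumes only that $f$ is \emph{measurable}, which is too weak; without integrability the terms $\int_{E_n} f\, d\mu$ need not be finite and the series on the right can be divergent or rearrangement-sensitive, so the identity can fail or be meaningless. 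Your caveat that $f$ be integrable (automatic in the paper's applications, where $f$ is bounded and $E$ is bounded, hence of finite measure) is precisely the hypothesis under which the result is true, and it is the hypothesis imposed in \cite{Roy}; the paper's statement would benefit from making it explicit.
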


Finally, Section \ref{productIneq} serves as an application of the results in Sections \ref{integralCSProof} and \ref{parsevalGen}; namely, we will describe sufficient conditions for when one can compare the integral of the product of two functions to the product of the integrals of the two functions. 

\vspace{0.2 cm}

\section{Proof of Integral Cauchy-Schwarz Inequality}
\label{integralCSProof}

The main theorem to be proven in this section is the following:

\vspace{0.2 cm}

\begin{theorem}[Integral Cauchy-Schwarz]
Let $E \subset \R^n$ be a bounded and measurable set, and let $g, h: E \rightarrow \R$ be bounded and measurable functions. Then

\begin{equation}
\left(\int_E g^2d\mu\right)\left(\int_E h^2 d\mu\right) \ \geq \ \left(\int_E gh d\mu\right)^2.
\label{integralCS}
\end{equation}

\end{theorem}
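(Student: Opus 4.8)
The plan is to reduce the inequality to the nonnegativity of a single real quadratic, mirroring the discrete Cauchy--Schwarz argument but carried out at the level of integrals. First I would fix the bounded measurable set $E$ and the bounded measurable functions $g, h$, and observe that since $E$ has finite measure and $g, h$ are bounded, each of the integrals $\int_E g^2\, d\mu$, $\int_E h^2\, d\mu$, and $\int_E gh\, d\mu$ is finite; this finiteness is exactly what legitimizes the manipulations that follow. The key auxiliary object is the real-valued function $P(t) = \int_E (g - th)^2\, d\mu$, defined for every $t \in \R$.

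Next I would expand the integrand pointwise as $(g - th)^2 = g^2 - 2t\, gh + t^2 h^2$ and use linearity of the integral to obtain $P(t) = \big(\int_E h^2\, d\mu\big)\, t^2 - 2\big(\int_E gh\, d\mu\big)\, t + \int_E g^2\, d\mu$. Because $(g - th)^2 \geq 0$ everywhere on $E$, monotonicity of the integral gives $P(t) \geq 0$ for all $t$. The heart of the argument is then the elementary fact that a real quadratic $at^2 + bt + c$ with $a \geq 0$ that is nonnegative for every $t$ must satisfy $b^2 - 4ac \leq 0$. Applying this with $a = \int_E h^2\, d\mu$, $b = -2\int_E gh\, d\mu$, and $c = \int_E g^2\, d\mu$ yields $4\big(\int_E gh\, d\mu\big)^2 - 4\big(\int_E h^2\, d\mu\big)\big(\int_E g^2\, d\mu\big) \leq 0$, which rearranges directly to the claimed inequality (\ref{integralCS}).

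The step requiring the most care, and the one I would flag as the \emph{main obstacle}, is the degenerate case in which the leading coefficient $\int_E h^2\, d\mu$ vanishes: then $P$ is no longer genuinely quadratic and the discriminant criterion does not apply. Here I would argue separately. If $\int_E h^2\, d\mu = 0$, then $P(t) = -2\big(\int_E gh\, d\mu\big)\, t + \int_E g^2\, d\mu$ is affine in $t$ yet bounded below by $0$ for all real $t$, which forces the linear coefficient to vanish, i.e. $\int_E gh\, d\mu = 0$. In that case the right-hand side of (\ref{integralCS}) is $0$ while the left-hand side is nonnegative, so the inequality holds trivially.

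Finally, to align this with the Fourier-analytic theme of the paper, I would note that the bilinear form $(f, k) \mapsto \int_E fk\, d\mu$ is symmetric and positive semidefinite, so the inequality may equivalently be read as Bessel's inequality applied to the single normalized function $h / \big(\int_E h^2\, d\mu\big)^{1/2}$ when its norm is nonzero: the squared Fourier coefficient of $g$ against this unit vector cannot exceed $\int_E g^2\, d\mu$. This reinterpretation is what connects the present section to the Parseval-type results developed later.
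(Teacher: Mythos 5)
Your proof is correct, and it takes a genuinely different route from the paper. You use the classical discriminant argument: the auxiliary quadratic $P(t) = \int_E (g - th)^2\, d\mu \geq 0$, expanded by linearity, forces $b^2 - 4ac \leq 0$, and you correctly flag and dispose of the degenerate case $\int_E h^2\, d\mu = 0$ (where the affine bound forces $\int_E gh\, d\mu = 0$). The paper instead proceeds through Fourier-analytic machinery: it first proves a weighted lemma \eqref{integralCSLemma1} for a strictly positive function $f$ by constructing a family of functions mutually orthogonal with respect to the weight $\frac{1}{f}$, minimizing a mean-square deviation integral to get a Bessel-type inequality \eqref{bessel}, and keeping only the $\phi_1$ term \eqref{besselTruncated}; it then substitutes $f := g^2$, $\phi_1 := gh$ on the set $D$ where both $g$ and $h$ are nonvanishing, and finally handles the set where $gh = 0$ via countable additivity \eqref{countableIntegralSum}. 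The two arguments are algebraically close kin: your optimal $t^{*} = \int_E gh\, d\mu \big/ \int_E h^2\, d\mu$ is exactly the paper's coefficient $c_1$ under that substitution, and $P(t^{*}) \geq 0$ is precisely the one-term Bessel inequality --- your closing remark about Bessel applied to a single normalized function makes this kinship explicit. What your approach buys: it is shorter, fully self-contained, needs no case analysis on the zero sets of $g$ and $h$, and avoids the paper's somewhat hand-waved construction of an infinite mutually orthogonal family (whose existence the paper does not actually establish, only truncates around). What the paper's approach buys: the mean-square-deviation setup, the coefficients \eqref{fixFourierCoefficients}, and the Bessel inequality are reused verbatim to prove the Parseval-type identity \eqref{posParseval} and the product inequality \eqref{prodIneq} in later sections, so the longer proof is the expository point of the paper rather than an inefficiency.
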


We will break the proof of \eqref{integralCS} into lemmas that act as a reduction of \eqref{integralCS} onto the case where the functions over which we integrate are strictly positive over the domain $E$.

\vspace{0.2 cm}

\begin{lemma}
Let $D \subset \R^n$ be a bounded and measurable set, and let $f, \phi_1: D \rightarrow \R$ be bounded and measurable functions, where $f$ only takes positive values in $D$. Then

\begin{equation}
\left(\int_D f d\mu\right)\left(\int_D \frac{\phi_1^2}{f}d\mu\right) \ \geq \ \left(\int_D \phi_1d\mu\right)^2.
\label{integralCSLemma1}
\end{equation}
\end{lemma}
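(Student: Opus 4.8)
The plan is to prove \eqref{integralCSLemma1} by a self-contained argument that does \emph{not} invoke the Integral Cauchy--Schwarz inequality \eqref{integralCS} itself; doing so would be circular, since this lemma is one of the reduction steps toward \eqref{integralCS}. The key observation is that the two left-hand factors arise naturally by integrating a pointwise perfect square in a real parameter $t$, so I would first dispose of the degenerate cases and then extract the inequality from the nonnegativity of a quadratic polynomial.

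First I would isolate the trivial cases. Since $f>0$ on $D$, the quantity $\int_D \phi_1^2/f\,d\mu$ is well defined in $[0,+\infty]$. If $\int_D f\,d\mu = 0$, then because $f>0$ the set $D = \bigcup_{k}\{f>1/k\}$ must have measure zero (each $\{f>1/k\}$ is null, by $\tfrac1k\mu(\{f>1/k\}) \le \int_D f\,d\mu = 0$), so every integral in \eqref{integralCSLemma1} vanishes and the inequality reads $0\ge 0$. If instead $\int_D \phi_1^2/f\,d\mu = +\infty$, then since $\phi_1$ is bounded and $D$ is bounded the right-hand side $\big(\int_D \phi_1\,d\mu\big)^2$ is finite while the left-hand side is $+\infty$, so \eqref{integralCSLemma1} again holds. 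Hence I may assume $0 < \int_D f\,d\mu < \infty$ and $\int_D \phi_1^2/f\,d\mu < \infty$, and note that $\int_D \phi_1\,d\mu$ is finite as well.

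In this nondegenerate case, the main step is to consider, for each $t\in\R$, the pointwise inequality $\big(t\sqrt{f} - \phi_1/\sqrt{f}\big)^2 \ge 0$, which is legitimate because $f>0$ guarantees that $\sqrt{f}$ and $\phi_1/\sqrt{f}$ are real and measurable on $D$. Expanding gives $t^2 f - 2t\,\phi_1 + \phi_1^2/f \ge 0$ pointwise; since each of the three summands is integrable on $D$, integrating and using linearity yields $Q(t) := t^2\!\int_D f\,d\mu - 2t\!\int_D \phi_1\,d\mu + \int_D \phi_1^2/f\,d\mu \ge 0$ for every real $t$. A quadratic with positive leading coefficient that is nonnegative on all of $\R$ must have discriminant at most zero; writing this out gives $4\big(\int_D \phi_1\,d\mu\big)^2 - 4\big(\int_D f\,d\mu\big)\big(\int_D \phi_1^2/f\,d\mu\big) \le 0$, which is exactly \eqref{integralCSLemma1}.

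The main obstacle I anticipate is not the algebra but the integrability bookkeeping: because $f$ is only assumed positive (not bounded away from zero), $\phi_1^2/f$ need not be bounded or integrable, so the degenerate-case analysis above is essential to make both the linearity step and the discriminant argument rigorous. If one prefers to avoid the discriminant trick (which is essentially a proof of Cauchy--Schwarz and may clash with the paper's stated aim of a Fourier-based derivation), the same conclusion follows from the pointwise Young-type inequality $|\phi_1| \le \tfrac{\lambda}{2}\,f + \tfrac{1}{2\lambda}\,\phi_1^2/f$, valid for every $\lambda>0$; integrating gives $\big|\int_D \phi_1\,d\mu\big| \le \tfrac{\lambda}{2}\int_D f\,d\mu + \tfrac{1}{2\lambda}\int_D \phi_1^2/f\,d\mu$, and optimizing over $\lambda$ (with $\lambda = \sqrt{\int_D \phi_1^2/f\,d\mu \big/ \int_D f\,d\mu}$) produces $\big(\int_D \phi_1\,d\mu\big)^2 \le \big(\int_D f\,d\mu\big)\big(\int_D \phi_1^2/f\,d\mu\big)$.
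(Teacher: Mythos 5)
Your proof is correct, but it is a genuinely different argument from the one in the paper. The paper proves \eqref{integralCSLemma1} with Fourier-analytic machinery: it embeds $\phi_1$ in a family $\{\phi_i\}$ that is mutually orthogonal with respect to the weight $\frac{1}{f}$, defines the coefficients $c_n$ as in \eqref{fixFourierCoefficients}, expands and completes the square in the mean-square deviation integral \eqref{meanSquareDeviationIntegral}, obtains the Bessel-type inequality \eqref{bessel}, and then simply discards all terms except $n=1$. Your route---nonnegativity of $Q(t)=t^2\int_D f\,d\mu - 2t\int_D\phi_1\,d\mu + \int_D \phi_1^2/f\,d\mu$ plus the discriminant condition (equivalently, your Young-inequality variant with optimization in $\lambda$)---is the classical proof of Cauchy--Schwarz applied to $u=\sqrt{f}$, $v=\phi_1/\sqrt{f}$. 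What your approach buys: it is shorter, self-contained, and genuinely more rigorous about the degenerate cases, since the paper's argument silently divides by $\int_D \phi_n^2/f\,d\mu$ (which could a priori be zero or infinite) and silently assumes an orthogonal family containing $\phi_1$ can be produced, whereas you dispose of $\mu(D)=0$ and $\int_D\phi_1^2/f\,d\mu=+\infty$ explicitly before running the algebra. What the paper's approach buys: the completed-square identity \eqref{fourierCoefficientMinimizeDev} and Bessel inequality \eqref{bessel} are exactly the objects reused later to prove the Parseval-type results \eqref{posParseval} and \eqref{genParseval}, so the lemma's proof doubles as setup for the rest of the paper; moreover, the paper's stated purpose is to derive Cauchy--Schwarz \emph{from} this Fourier framework, so substituting your argument---as you yourself note---would make the subsequent ``new proof'' of \eqref{integralCS} collapse into the standard one. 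Your proof is the better choice if one only wants the lemma; the paper's is the right choice given what the paper is trying to showcase.
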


\begin{proof} Fix the function $\phi_1$ and construct a family of functions $\phi_2, \phi_3, ...$ such that the collection $\{\phi_i\}^{\infty}_{i = 1}$ is mutually orthogonal on $D$ with respect to the [positive] weight function $\frac{1}{f}$. That is, $\fa i \neq j$,

\begin{equation}
\int_D \phi_i\phi_j \cdot \frac{1}{f}d\mu \ = \ 0.
\label{orthogonality}
\end{equation}

If there does not exist an infinite family of mutually orthogonal functions that includes $\phi_1$, we truncate the family after including some $k \geq 1$ functions (only the value $\phi_1$ will have any relevance at the end of the proof). Now we define the sequence of partial sums $s_N := \sum^{N}_{n = 1}c_n\phi_n$, where for each $n$ we set

\begin{equation}
c_n \ := \ \frac{\int_D \phi_n d\mu}{\int_D \frac{\phi_n^2}{f}d\mu}.
\label{fixFourierCoefficients}
\end{equation}

If our family of mutually orthogonal functions $\{\phi_i\}$ only contains $k$ functions, then we set $s_n :=  s_k$ whenever $n > k$; in this case, the sequence of partial sums is said to be \textbf{eventually constant}. Now the inequality \eqref{integralCSLemma1} will follow from an attempt to minimize the \textbf{mean-square deviation integral}

\begin{equation}
\int_D (f - s_N)^2 \cdot \frac{1}{f}d\mu,
\label{meanSquareDeviationIntegral}
\end{equation}

much akin to how \cite{Wei} derives Parseval's Identity, except here we are considering the special case where $\rho := \frac{1}{f}$. Since $f > 0$ on $D$, the integral \eqref{meanSquareDeviationIntegral} is nonnegative for all $N \in \N^+$. We can expand the integral \eqref{meanSquareDeviationIntegral} and complete the square; due to the mutual orthogonality of the functions $\{\phi_n\}^{N}_{n = 1}$, \eqref{meanSquareDeviationIntegral} in fact equals

\begin{eqnarray}
\int_D f d\mu - 2\sum^{N}_{n = 1}c_n\int_D \phi_n d\mu + \sum^{N}_{n = 1}c_n^2\int_D \frac{\phi_n^2}{f}d\mu \ = \ \nonumber\\
 \sum^{N}_{n = 1}\int_D \frac{\phi_n^2}{f}d\mu\left(c_n - \frac{\int_D \phi_n d\mu}{\int_D \frac{\phi_n^2}{f}d\mu}\right)^2 + \int_D f d\mu - \sum^{N}_{n = 1}\frac{\left(\int_D \phi_n d\mu\right)^2}{\int_D \frac{\phi_n^2}{f}d\mu}. \label{completeFourierSquare}
\end{eqnarray}

Due to our choice of coefficients $c_n$ in \eqref{fixFourierCoefficients}, the leftmost term in \eqref{completeFourierSquare} vanishes. Hence by the equivalence between \eqref{meanSquareDeviationIntegral} and \eqref{completeFourierSquare},

\begin{equation}
\int_D (f - s_N)^2 \cdot \frac{1}{f}d\mu \ = \ \int_D f d\mu - \sum^{N}_{n = 1}\frac{\left(\int_D \phi_n d\mu\right)^2}{\int_D \frac{\phi_n^2}{f}d\mu}.
\label{fourierCoefficientMinimizeDev}
\end{equation}

This is a special case of what is known as \textbf{Bessel's Inequality} \cite{Wei}. However, the left-hand side of \eqref{fourierCoefficientMinimizeDev} is nonnegative, so

\begin{equation}
\int_D f d\mu \ \geq \ \sum^{N}_{n = 1}\frac{\left(\int_D \phi_n d\mu\right)^2}{\int_D \frac{\phi_n^2}{f}d\mu}.
\label{bessel}
\end{equation}

Moreover, since $f > 0$ on $D$, each term in the sum on the lesser side of \eqref{bessel} is nonnegative. Regardless of the value of $N$,

\begin{equation}
\int_D f d\mu \ \geq \ \frac{\left(\int_D \phi_1 d\mu\right)^2}{\int_D \frac{\phi_1^2}{f}d\mu}.
\label{besselTruncated}
\end{equation}

Rearranging the factors in \eqref{besselTruncated} gives us \eqref{integralCSLemma1} immediately. \end{proof}

We now immediately use \eqref{integralCSLemma1} in the proof of another lemma that will complete the reduction of \eqref{integralCS} to the case of integrating functions taking strictly positive values over a bounded, measurable set.

\vspace{0.2 cm}

\begin{lemma}
Let $D \subset \R^n$ be a bounded and measurable set, and let $g, h: D \rightarrow \R \setminus \{0
\}$ be bounded and measurable functions. Then

\begin{equation}
\left(\int_D g^2 d\mu\right)\left(\int_D h^2 d\mu\right) \ \geq \ \left(\int_D gh d\mu\right)^2.
\label{integralCSLemma2}
\end{equation}

\end{lemma}

\begin{proof}
We will perform a change of variables onto $g$ and $h$. Let $f := g^2$ and $\phi_1 := gh$ on $D$. Then the functions $f$, $\frac{\phi_1^2}{f}$, and $\phi_1$ are all bounded and measurable on $D$, and moreover, $f$ is strictly positive on $D$ since $g$ is. With this change of variables the result \eqref{integralCSLemma2} follows immediately from \eqref{integralCSLemma1}. 
\end{proof}

Finally we can consider how to address the case where the functions in question take on the value zero within our choice of measurable set. We hence turn to complete the proof of \eqref{integralCS}.

\begin{proof}[Proof of \eqref{integralCS}] We set $D := \{x \in E. \ g(x) \neq 0 \land h(x) \neq 0\}$, and this set is bounded and measurable since $E$ is. Then the bounded and measurable set $E \setminus D$ is equivalent to $\{x \in E. \ g(x) = 0 \lor h(x) = 0\}$. Since $E = D \cup (E \setminus D)$ is a disjoint union, the identity \eqref{countableIntegralSum} yields

\begin{equation}
\int_E gh d\mu \ = \ \int_D gh d\mu + \int_{E \setminus D}gh d\mu.
\label{lesserSideMeasSplit}
\end{equation}

However, by the choice of set $D$, $gh = 0$ on $E \setminus D$, so the final term in \eqref{lesserSideMeasSplit} vanishes, and \eqref{integralCSLemma2} gives us

\begin{equation}
\left(\int_D g^2 d\mu\right)\left(\int_D h^2 d\mu\right) \ \geq \ \left(\int_E gh d\mu\right)^2.
\label{integralCSLemma2A}
\end{equation}

Now we notice that $g^2$ and $h^2$ are nonnegative on $E \setminus D$, so the following hold:

\begin{equation}
\int_E g^2 d\mu \ = \ \int_D g^2 d\mu + \int_{E \setminus D}g^2 d\mu \ \geq \ \int_D g^2d\mu
\label{nonnegMeasSplitA}
\end{equation}
\begin{equation}
\int_E h^2 d\mu \ = \ \int_D h^2 d\mu + \int_{E \setminus D}h^2 d\mu \ \geq \ \int_D h^2d\mu.
\label{nonnegMeasSplitB}
\end{equation}

Substituting \eqref{nonnegMeasSplitA} and \eqref{nonnegMeasSplitB} into \eqref{integralCSLemma2A} gives us the desired result. \end{proof}

\begin{remark} This line of reasoning would clearly not be valid if we were using Riemann Integrals over intervals in $\R$.
\end{remark}

\vspace{0.2 cm}

\section{Parseval's Identity on Bounded and Measurable Functions}
\label{parsevalGen}

While the Integral Cauchy-Schwarz Inequality is an extremely powerful tool in analysis and partial differential equations, among other fields, the other merit of the proof used in Section \ref{integralCSProof} is that it expedites the development of a special case of Parseval's Identity. Namely, we will investigate the case where the standard Parseval's Identity in \cite{Wei} is reduced to integrating some function $f$ over a measurable set.

\vspace{0.2 cm}

\begin{lemma} [Parseval's Identity on Positive Functions] Let $D \subset \R^n$ be a bounded and measurable set, let $f: D \rightarrow \R$ be bounded, positive, and measurable on $D$, and let $\phi_n: D \rightarrow \R$ be a collection of functions which are mutually orthogonal on $D$ with respect to $\frac{1}{f}$.  Let the constants $c_n$ be defined as in \eqref{fixFourierCoefficients}, and in addition now assume that $f$'s Fourier Expansion actually exists, i.e. $f = \sum^{\infty}_{n = 1}c_n\phi_n$. Then

\begin{equation}
\sum^{\infty}_{n = 1}c_n^2\int_D \frac{\phi_n^2}{f}d\mu \ = \ \int_Df d\mu. \label{posParseval}
\end{equation}

\end{lemma}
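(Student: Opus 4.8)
The plan is to build directly on the mean-square deviation identity \eqref{fourierCoefficientMinimizeDev} from Section \ref{integralCSProof}. First I would use the definition \eqref{fixFourierCoefficients} to simplify each summand appearing there: since $c_n = \frac{\int_D \phi_n d\mu}{\int_D \frac{\phi_n^2}{f}d\mu}$, multiplying through gives
\begin{equation*}
c_n^2 \int_D \frac{\phi_n^2}{f}d\mu \ = \ \frac{\left(\int_D \phi_n d\mu\right)^2}{\int_D \frac{\phi_n^2}{f}d\mu},
\end{equation*}
so that \eqref{fourierCoefficientMinimizeDev} becomes
\begin{equation*}
\int_D (f - s_N)^2 \cdot \frac{1}{f}d\mu \ = \ \int_D f d\mu - \sum^{N}_{n = 1}c_n^2 \int_D \frac{\phi_n^2}{f}d\mu.
\end{equation*}
This reframes the target \eqref{posParseval} as the single assertion that the left-hand side tends to $0$ as $N \to \infty$.

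Next I would record that both sides converge. By \eqref{bessel} the partial sums $\sum^{N}_{n = 1}c_n^2 \int_D \frac{\phi_n^2}{f}d\mu$ are nondecreasing and bounded above by $\int_D f d\mu$, so they increase to some limit $S \leq \int_D f d\mu$, and correspondingly $\int_D (f - s_N)^2 \cdot \frac{1}{f}d\mu \to \int_D f d\mu - S \geq 0$. The entire force of the lemma is that this nonnegative limit equals $0$, equivalently that $S = \int_D f d\mu$, and it is here that the standing hypothesis $f = \sum^{\infty}_{n = 1}c_n\phi_n$ must enter: without assuming the expansion genuinely represents $f$, one only recovers Bessel's inequality, not equality.

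Finally, to pin the limit at $0$ I would invoke the expansion hypothesis. The cleanest reading is that $f = \sum^{\infty}_{n = 1}c_n\phi_n$ holds in the weighted norm $\|g\|^2 := \int_D g^2 \cdot \frac{1}{f}d\mu$, i.e. $\|f - s_N\| \to 0$; then the left-hand side of the recast identity is exactly $\|f - s_N\|^2$ and tends to $0$ by definition, yielding \eqref{posParseval} in one line. The main obstacle is precisely the sense in which the expansion converges: under merely pointwise convergence of $s_N$ to $f$ one must supply a dominating function and apply a convergence theorem on the bounded set $D$ to push the limit through an integral. To make the mechanism concrete, I would also note the identity $\int_D \frac{s_N^2}{f}d\mu = \int_D s_N d\mu$, which follows by combining the orthogonality \eqref{orthogonality} with the definition \eqref{fixFourierCoefficients}; its left-hand side is the $N$th partial sum of \eqref{posParseval}, so once $\int_D s_N d\mu \to \int_D f d\mu$ is justified, \eqref{posParseval} follows at once. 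I expect the lemma is intended in the weighted-$L^2$ sense, which reduces the last step to a direct consequence of the first paragraph.
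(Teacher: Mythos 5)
Your proposal is correct and takes essentially the same route as the paper's own proof: both rest on the mean-square deviation identity \eqref{fourierCoefficientMinimizeDev}, use Bessel's inequality \eqref{bessel} to control the partial sums, and interpret the hypothesis that the Fourier expansion exists as mean-square convergence of $s_N$ to $f$ (the paper cites \cite{Wei} for this), which forces the Bessel inequality into an equality. Your version is somewhat more explicit than the paper's—in particular in spelling out that $c_n^2\int_D \frac{\phi_n^2}{f}d\mu = \frac{\left(\int_D \phi_n d\mu\right)^2}{\int_D \frac{\phi_n^2}{f}d\mu}$ and in flagging the convergence-mode issue—but the underlying argument is the same.
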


\begin{proof} The conditions on $D$ and $f$ for this lemma are the same as for \eqref{integralCSLemma1}. Thus we can re-assert \eqref{bessel}. Since \eqref{bessel} holds $\fa n \in \N^+$, we actually have

\begin{equation}
\int_D fd\mu \ \geq \ \sum^{\infty}_{n = 1}\frac{\left(\int_D \phi_n d\mu\right)^2}{\int_D \frac{\phi_n^2}{f}d\mu}.\label{besselInf}
\end{equation}

Since $f$'s Fourier Expansion exists, we have that $f \ = \ \sum^{\infty}_{n = 1}c_n\phi_n$ and that \eqref{meanSquareDeviationIntegral} will converge to $0$ \cite{Wei}. Then \eqref{posParseval} follows immediately, as \eqref{besselInf} becomes an equality. \end{proof}

\vspace{0.2 cm}

\begin{remark}In the more traditional proof of Parseval's Identity found in \cite{Wei}, the function $f$ we use here is essentially replaced by $\frac{1}{f}$. We made this modification specifically in order to reinterpret Parseval's Identity as a decomposition equalling the integral of a bounded, measurable function over a bounded, measurable set.
\end{remark}

In other words, \eqref{posParseval} establishes a connection between the existence of Fourier Coefficients and the ability to integrate a positive function over a bounded and measurable set. In fact, even if $f$ does not have Fourier Coefficients over a set but does have coefficients on some collection of disjoint, covering subsets, then we can use this theorem multiple times and sum the resulting integrals. It won't even matter if the Fourier Coefficients for $f$ are different on each set. This is demonstrated in the forthcoming proof of \eqref{genParseval}.

\vspace{0.2 cm}

\begin{theorem}[Integration of Bounded and Measurable Functions] Let $E \subset \R^n$ be a bounded and measurable set, let $f: E \rightarrow \R$ be bounded and measurable, and let the sets $D_i$ be bounded, measurable, and mutually disjoint such that $E = \cup^{\infty}_{i = 1}D_i$. Assume that on each $D_i$, $f$ carries a unique sign (i.e. is positive, negative, or zero) and has Fourier Coefficients denoted by 

\begin{equation}
c_{i, n} \ := \ \frac{\int_{D_i} \phi_{i, n} d\mu}{\int_{D_i} \frac{\phi_{i, n}^2}{f}d\mu}
\label{fixFourierCoefficientsIndex}
\end{equation}

for each $i \in \N^+$, where $\phi_{i, n}$ represents a mutually orthogonal family of functions with respect to $\frac{1}{f}$ on $D_i$. Then

\begin{equation}
\sum^{\infty}_{i = 1}\sum^{\infty}_{n = 1}c_{i, n}^2\int_{D_i} \frac{\phi_{i, n}^2}{f}d\mu \ = \ \int_Ef d\mu. \label{genParseval} 
\end{equation}
\end{theorem}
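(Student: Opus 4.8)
The plan is to prove \eqref{genParseval} by localizing the identity to each block $D_i$ and then reassembling via countable additivity. First I would apply the Countable Additivity of Integration lemma to the disjoint decomposition $E = \cup^{\infty}_{i=1}D_i$, giving $\int_E f d\mu = \sum^{\infty}_{i=1}\int_{D_i}f d\mu$. It therefore suffices to prove, for each fixed $i$, the single-block identity
\[
\sum^{\infty}_{n=1}c_{i,n}^2\int_{D_i}\frac{\phi_{i,n}^2}{f}d\mu \ = \ \int_{D_i}f d\mu;
\]
summing this over $i$ and substituting the countable-additivity expansion of $\int_E f d\mu$ then yields \eqref{genParseval} at once. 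The crux is thus the single-block identity, which I would establish by splitting into the three cases dictated by the sign $f$ carries on $D_i$.

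If $f$ is positive on $D_i$, then the hypotheses of the Parseval's Identity on Positive Functions lemma are satisfied verbatim (with $D = D_i$ and the orthogonal family $\phi_{i,n}$), so \eqref{posParseval} is precisely the single-block identity. If $f$ is negative on $D_i$, I would reduce to the positive case by passing to $\tilde{f} := -f$, which is positive on $D_i$. The orthogonality relation \eqref{orthogonality} survives this sign change, since scaling the weight $\frac{1}{f}$ by $-1$ does not disturb the vanishing of the cross-integrals; hence $\{\phi_{i,n}\}$ is also mutually orthogonal with respect to $\frac{1}{\tilde{f}}$. Reading off \eqref{fixFourierCoefficientsIndex}, the Fourier coefficients of $\tilde{f}$ are $-c_{i,n}$, and because $f = \sum_n c_{i,n}\phi_{i,n}$ is assumed on $D_i$ we also get $\tilde{f} = \sum_n(-c_{i,n})\phi_{i,n}$, so the expansion hypothesis of \eqref{posParseval} holds for $\tilde{f}$. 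Applying \eqref{posParseval} to $\tilde{f}$ and then undoing the substitution via $(-c_{i,n})^2 = c_{i,n}^2$, $\int_{D_i}\frac{\phi_{i,n}^2}{\tilde{f}}d\mu = -\int_{D_i}\frac{\phi_{i,n}^2}{f}d\mu$, and $\int_{D_i}\tilde{f}d\mu = -\int_{D_i}f d\mu$, the two sign flips cancel and the single-block identity is recovered. Finally, if $f$ vanishes identically on $D_i$, the right-hand side $\int_{D_i}f d\mu$ is zero while the weight $\frac{1}{f}$ — and hence the entire Fourier construction — is vacuous; interpreting the associated orthogonal family as empty makes the inner sum zero as well, so the identity holds trivially.

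The step I expect to be the main obstacle is the negative case, where the reduction to $\tilde{f} = -f$ must be justified carefully: one has to confirm that every hypothesis of \eqref{posParseval} genuinely transfers under the sign change (orthogonality, the defining quotient \eqref{fixFourierCoefficientsIndex}, and above all the existence of the Fourier expansion), and then verify that the sign flips introduced in the coefficients, the weighted integrals, and the target integral cancel exactly rather than corrupt the identity. A secondary point worth making precise is that the double series in \eqref{genParseval} is to be read as an iterated sum: each inner sum converges to $\int_{D_i}f d\mu$ by the single-block identity, and the resulting outer series converges to $\int_E f d\mu$ by countable additivity (valid since $f$ bounded on the bounded set $E$ is integrable), so no rearrangement of terms is required even though individual summands may be negative on the blocks where $f < 0$.
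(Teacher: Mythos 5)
Your proposal is correct and follows essentially the same route as the paper's own proof: a sign-based case analysis on each $D_i$ reducing to \eqref{posParseval} (applied to $-f$ when $f<0$), with the blocks reassembled by the countable additivity lemma \eqref{countableIntegralSum}. In fact, your careful verification that orthogonality, the coefficients $c_{i,n} \mapsto -c_{i,n}$, and the expansion hypothesis all transfer under the sign change — and that the resulting sign flips cancel — supplies exactly the details the paper leaves to the reader.
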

\begin{proof}First we assume without loss of generality that $f$ is nonzero on $E$. If there exists an $i \in \N^+$ for which $f$ is identically zero on $D_i$, then its Fourier Coefficients are all zeros, and so $D_i$ can essentially be ignored when calculating $\int_E f d\mu$. 

\vspace{0.2 cm}

Now let $i \in \N^+$ be arbitrary. If $D_i$ is such that $f > 0$ on $D_i$, then \eqref{posParseval} immediately holds over $D_i$. On the other hand, if $f < 0$ on $D_i$, then $-f > 0$ on $D_i$ so \eqref{posParseval} can be applied to $-f$. We leave the details to the reader to show that this case also reduces to \eqref{posParseval}. That is, $\forall \ i \in \N^+$,

\begin{equation}
\sum^{\infty}_{n = 1}c_{i, n}^2\int_{D_i} \frac{\phi_{i, n}^2}{f}d\mu \ = \ \int_{D_i}f d\mu.\label{parsevalPosIndex}
\end{equation}

Summing \eqref{parsevalPosIndex} over all $i \in \N^+$ and applying \eqref{countableIntegralSum} to the right-hand side gives the desired result. \end{proof}

\vspace{0.2 cm}

\section{Criterion for the Product Inequality}\label{productIneq}

One problem that appears distant from Parseval's Identity and Fourier Analysis is determining for which measurable functions $f$ and $g$ we have the inequality

\begin{equation}\label{prodIneq}
\int_D fg d\mu \geq \int_D f d\mu\int_D g d\mu
\end{equation}

for some bounded measurable set $D$. It turns out we can utilize the results of Section \ref{parsevalGen} to develop a sufficient condition for when this occurs. We state this result explicitly now.

\vspace{0.2 cm}

\begin{theorem} [Sufficient Condition for Product Inequality] Let $D \subset \R^n$ be a bounded and measurable set, let $f, g: D \rightarrow \R$ be bounded, positive, and measurable on $D$, and let $\phi_n, \psi_m: D \rightarrow \R$ be two collections of bounded, measurable functions that are mutually orthogonal on $D$ with respect to $\frac{1}{f}$ and $\frac{1}{g}$, respectively. Suppose that $f$ and $g$ have Fourier Expansions $f = \sum^{\infty}_{n = 1}c_n\phi_n$, $g = \sum^{\infty}_{m = 1}d_m\psi_m$ where 

\begin{equation}\label{firstFourierCoeff}
c_n \ := \ \frac{\int_D \phi_n d\mu}{\int_D \frac{\phi_n^2}{f}d\mu}
\end{equation}
\begin{equation}\label{secondFourierCoeff}
d_m \ := \ \frac{\int_D \psi_md\mu}{\int_D \frac{\psi_m^2}{g}d\mu}
\end{equation}

for all $m, n \in \N^+$. Further suppose that the following inequalities hold for all $m, n \in \N^+$:

\begin{equation}\label{prodFormCritA}
\int_D \frac{\phi_n^2\psi_m^2}{fg}d\mu \ \leq \ \int_D \frac{\phi_n^2}{f}d\mu\int_D\frac{\psi_m^2}{g}d\mu
\end{equation}
\begin{equation}\label{prodFormCritB}
c_nd_m\int_D\phi_n\psi_md\mu \ \geq \ c_nd_m\int_D\phi_nd\mu\int_D\psi_md\mu
\end{equation}

Under these conditions \eqref{prodIneq} holds.
\end{theorem}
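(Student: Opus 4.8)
The plan is to express both sides of \eqref{prodIneq} as double series indexed by the Fourier data of $f$ and $g$, and then to compare those series term by term, with hypothesis \eqref{prodFormCritB} supplying the decisive inequality and the Positive Parseval Identity \eqref{posParseval} identifying the limits of the sums. First I would substitute the Fourier Expansions $f = \sum_{n=1}^\infty c_n\phi_n$ and $g = \sum_{m=1}^\infty d_m\psi_m$ into the left-hand side of \eqref{prodIneq}, obtaining $\int_D fg\, d\mu = \sum_{n=1}^\infty\sum_{m=1}^\infty c_n d_m\int_D \phi_n\psi_m\, d\mu$. In parallel I would rewrite the right-hand side: from the definitions \eqref{firstFourierCoeff} and \eqref{secondFourierCoeff} one has $c_n\int_D\phi_n\, d\mu = c_n^2\int_D \tfrac{\phi_n^2}{f}\, d\mu$, and similarly for $d_m$, so that \eqref{posParseval} applied to $f$ and to $g$ gives $\int_D f\, d\mu = \sum_{n=1}^\infty c_n\int_D\phi_n\, d\mu$ and $\int_D g\, d\mu = \sum_{m=1}^\infty d_m\int_D\psi_m\, d\mu$. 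Because every summand $c_n\int_D\phi_n\, d\mu = c_n^2\int_D\tfrac{\phi_n^2}{f}\, d\mu$ is nonnegative, these two series converge unconditionally, and their product $\int_D f\, d\mu\int_D g\, d\mu$ factors cleanly as the double series $\sum_{n=1}^\infty\sum_{m=1}^\infty c_n d_m\int_D\phi_n\, d\mu\int_D\psi_m\, d\mu$.

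With both sides written over the same index set $\N^+\times\N^+$, the comparison becomes immediate, since \eqref{prodFormCritB} asserts precisely that each summand of the left series dominates the corresponding summand of the right series. Summing these termwise inequalities over all $(n,m)$ then yields $\int_D fg\, d\mu\geq \int_D f\, d\mu\int_D g\, d\mu$, which is exactly \eqref{prodIneq}. I would present this as the engine of the argument: once the two representations are in place, the inequality follows from \eqref{prodFormCritB} alone, and one sees that the direction of \eqref{prodFormCritB} is what makes the argument run.

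The hard part will be justifying the two analytic steps that the above glosses over, namely interchanging the integral with the double summation to obtain $\int_D fg\, d\mu = \sum_{n,m} c_n d_m\int_D\phi_n\psi_m\, d\mu$, and summing the termwise inequalities in a manner whose limits are genuinely $\int_D fg\, d\mu$ and $\int_D f\, d\mu\int_D g\, d\mu$. This is where I expect \eqref{prodFormCritA} to be used. Writing $\int_D\phi_n\psi_m\, d\mu = \int_D \tfrac{\phi_n\psi_m}{\sqrt{fg}}\cdot\sqrt{fg}\, d\mu$ and invoking the Integral Cauchy-Schwarz Inequality \eqref{integralCS} bounds $\left(\int_D\phi_n\psi_m\, d\mu\right)^2$ by $\int_D\tfrac{\phi_n^2\psi_m^2}{fg}\, d\mu\int_D fg\, d\mu$; substituting \eqref{prodFormCritA} into the first factor then controls each summand by $\int_D\tfrac{\phi_n^2}{f}\, d\mu\int_D\tfrac{\psi_m^2}{g}\, d\mu$, up to the fixed constant $\int_D fg\, d\mu$, whose weights are exactly those of the convergent Parseval series for $f$ and $g$. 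In this way \eqref{prodFormCritA} makes the termwise comparison meaningful rather than merely formal.

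Finally, to discharge the convergence bookkeeping without appealing to unconditional convergence of the full double series, I would run a partial-sum argument. Setting $s_N := \sum_{n=1}^N c_n\phi_n$ and $t_M := \sum_{m=1}^M d_m\psi_m$, the identity $fg - s_N t_M = (f - s_N)g + s_N(g - t_M)$, combined with \eqref{integralCS} and the mean-square convergence of \eqref{meanSquareDeviationIntegral} built into \eqref{posParseval}, shows that $\int_D s_N t_M\, d\mu\to\int_D fg\, d\mu$, so the square partial sums of the left-hand double series converge to the correct value. The right-hand series, having nonnegative terms, converges to $\int_D f\, d\mu\int_D g\, d\mu$ along the same square partial sums, so passing to the limit in the termwise inequality established from \eqref{prodFormCritB} delivers \eqref{prodIneq}.
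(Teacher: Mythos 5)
Your proposal is correct at the paper's own level of rigor, but it takes a genuinely different route from the paper's proof. The paper reruns the Bessel--Parseval machinery of Section \ref{integralCSProof} on the product function: it expands the weighted deviation integral \eqref{meanSquareDeviationProd} for $fg$ against $s_Nt_N$, applies Cauchy--Schwarz for finite sums, invokes \emph{both} \eqref{prodFormCritA} and \eqref{prodFormCritB}, completes the square to obtain the Bessel-type inequality \eqref{meanSquareDeviationProdG}, factors the double sum into a product of two single sums, lets $N \rightarrow \infty$, and finishes with \eqref{posParseval} applied twice. You instead compare two double series term by term: the rectangular partial sums $\int_D s_N t_M\, d\mu$ converge to $\int_D fg\, d\mu$ (via the splitting $fg - s_Nt_M = (f-s_N)g + s_N(g-t_M)$, Cauchy--Schwarz, and the mean-square convergence of \eqref{meanSquareDeviationIntegral} that the paper builds into ``the Fourier expansion exists''), the corresponding partial sums on the other side converge to $\int_D f\, d\mu \int_D g\, d\mu$ by \eqref{posParseval}, and \eqref{prodFormCritB} is exactly the termwise domination, so passing to the limit gives \eqref{prodIneq}. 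Your route buys two real things. First, your finished argument (the final paragraph) never uses \eqref{prodFormCritA} at all, so you prove the theorem under strictly weaker hypotheses; your third paragraph's attempt to press \eqref{prodFormCritA} into service for convergence of the full double series is inconclusive (it bounds individual coefficients, not their sum), but your rectangle partial-sum argument makes that issue moot, since no unconditional convergence of the double series is ever needed. Second, you avoid the paper's passage from \eqref{meanSquareDeviationProdA} to \eqref{meanSquareDeviationProdB}, where the finite-sum Cauchy--Schwarz inequality is applied in a form that, as written, drops the factors of $N$ that Cauchy--Schwarz produces; in the paper's route, \eqref{prodFormCritA} exists precisely to process the term created by that step, and your comparison has no analogous move. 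Two caveats: you should spell out the uniform-in-$N$ bound on $\int_D g\, s_N^2\, d\mu$ (it follows from mean-square convergence, since $\int_D s_N^2/f\, d\mu \rightarrow \int_D f\, d\mu$), and note that your application of \eqref{integralCS} to quotients such as $(g-t_M)/\sqrt{g}$ exceeds its stated boundedness hypotheses --- though the paper's own proofs take identical liberties with the weight $1/f$, so this does not put you below the paper's standard of rigor.
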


\vspace{0.2 cm}

\begin{proof} We can assume without loss of generality that the families $\{\phi_n\}^{\infty}_{n = 1}$ and $\{\psi_m\}^{\infty}_{m = 1}$ both contain infinitely many distinct functions for the same reason as in the proof of \eqref{integralCS}. For each $N \in \N^+$ denote the partial sums $s_N := \sum^{N}_{n = 1}c_n\phi_n$ and $t_N := \sum^{N}_{m = 1}d_m\psi_m$. With that we seek to minimize another mean-square deviation inequality:

\begin{equation}\label{meanSquareDeviationProd}
\int_D(fg - s_Nt_N)^2 \cdot \frac{1}{fg}d\mu.
\end{equation}

Expanding the square yields

\begin{equation}\label{meanSquareDeviationProdA}
\int_D fgd\mu - 2\sum^{N}_{n = 1}\sum^{N}_{m = 1}c_nd_m\int_D \phi_n\psi_md\mu + \int_D \frac{\left(\sum^{N}_{n = 1}c_n\phi_n\sum^{N}_{m = 1}d_m\psi_m\right)^2}{fg}d\mu.
\end{equation}

\vspace{0.2 cm}

Notice that when we expanded the square of \eqref{meanSquareDeviationIntegral}, we used the mutual orthogonality property of $\{\phi_n\}^{\infty}_{n = 1}$ to cause many terms in the expansion of $\int_D \frac{\left(\sum^{N}_{n = 1}c_n\phi_n\right)^2}{fg}d\mu$ to cancel. On the other hand, we do a different analysis here, and henceforth this proof is no longer merely a generalization of the proof of \eqref{integralCS}. We use the Cauchy Schwarz Inequality for finite sums to conclude that \eqref{meanSquareDeviationProdA} is bounded above by

\begin{equation}\label{meanSquareDeviationProdB}
\int_D fgd\mu - 2\sum^{N}_{n = 1}\sum^{N}_{m = 1}c_nd_m\int_D \phi_n\psi_md\mu + \int_D \frac{\sum^{N}_{n = 1}c_n^2\phi_n^2\sum^{N}_{m = 1}d_m^2\psi_m^2}{fg}d\mu.
\end{equation}

Furthermore, rewrite the third integral expression as a double sum of integrals:

\begin{equation}\label{meanSquareDeviationProdC}
\int_D fgd\mu - 2\sum^{N}_{n = 1}\sum^{N}_{m = 1}c_nd_m\int_D \phi_n\psi_md\mu + \sum^{N}_{n = 1}\sum^{N}_{m = 1}c_n^2d_m^2\int_D \frac{\phi_n^2\psi_m^2}{fg}d\mu.
\end{equation}

Now, upon using \eqref{prodFormCritA} and \eqref{prodFormCritB} for each $m, n \leq N$ we see that \eqref{meanSquareDeviationProdC} is bounded above by

\begin{equation}\label{meanSquareDeviationProdD}
\int_D fgd\mu - 2\sum^{N}_{n = 1}\sum^{N}_{m = 1}c_nd_m\int_D \phi_nd\mu\int_D \psi_md\mu + \sum^{N}_{n = 1}\sum^{N}_{m = 1}c_n^2d_m^2\int_D \frac{\phi_n^2}{f}d\mu\int_D\frac{\psi_m^2}{g}d\mu.
\end{equation}

If we add and subtract the term $\sum^{N}_{n = 1}\sum^{N}_{m = 1}\frac{\left(\int_D \phi_nd\mu\right)^2\left(\int_D \psi_md\mu\right)^2}{\int_D \frac{\phi_n^2}{f}d\mu\int_D \frac{\psi_m^2}{g}d\mu}$ from \eqref{meanSquareDeviationProdD} and complete the square we obtain

\begin{equation}\label{meanSquareDeviationProdE}
\sum^{N}_{n = 1}\sum^{M}_{n = 1}\int_D\frac{\phi_n^2}{f}d\mu\int_D\frac{\psi_m^2}{g}d\mu\left(c_nd_m - \frac{\int_D \phi_nd\mu\int_D \psi_md\mu}{\int_D \frac{\phi_n^2}{f}d\mu\int_D\frac{\psi_m^2}{g}d\mu}\right)^2 + \int_D fgd\mu - \sum^{N}_{n = 1}\sum^{N}_{m = 1}\frac{\left(\int_D \phi_nd\mu\right)^2\left(\int_D \psi_md\mu\right)^2}{\int_D \frac{\phi_n^2}{f}d\mu\int_D \frac{\psi_m^2}{g}d\mu}.
\end{equation}

Of course, the squared term vanishes due to the Fourier Coefficient assignments \eqref{firstFourierCoeff} and \eqref{secondFourierCoeff}, and hence it follows that

\begin{equation}\label{meanSquareDeviationProdF}
\int_D (fg - s_Nt_N)^2 \cdot \frac{1}{fg}d\mu \leq \int_D fgd\mu -\sum^{N}_{n = 1}\sum^{N}_{m = 1}\frac{\left(\int_D \phi_nd\mu\right)^2\left(\int_D \psi_md\mu\right)^2}{\int_D \frac{\phi_n^2}{f}d\mu\int_D \frac{\psi_m^2}{g}d\mu}.
\end{equation}

The lesser side of \eqref{meanSquareDeviationProdF} is nonnegative, so bounding it below by $0$ and rearranging yields another Bessel-type inequality

\begin{equation}\label{meanSquareDeviationProdG}
\int_D fgd\mu \geq \sum^{N}_{n = 1}\sum^{N}_{m = 1}\frac{\left(\int_D \phi_nd\mu\right)^2\left(\int_D \psi_md\mu\right)^2}{\int_D \frac{\phi_n^2}{f}d\mu\int_D \frac{\psi_m^2}{g}d\mu}.
\end{equation}

We can readily write the lesser side of \eqref{meanSquareDeviationProdG} as a product of two sums:

\begin{equation}\label{meanSquareDeviationProdH}
\int_D fgd\mu \geq \left(\sum^{N}_{n = 1}\frac{\left(\int_D \phi_nd\mu\right)^2}{\int_D \frac{\phi_n^2}{f}d\mu}\right)\left(\sum^{N}_{m = 1}\frac{\left(\int_D \psi_md\mu\right)^2}{\int_D \frac{\psi_m^2}{g}d\mu}\right).
\end{equation}

Since the greater side of \eqref{meanSquareDeviationProdH} is independent of $N$ we can take the limit $N \rightarrow \infty$ to realize

\begin{equation}\label{meanSquareDeviationProdI}
\int_D fgd\mu \geq \left(\sum^{\infty}_{n = 1}\frac{\left(\int_D \phi_nd\mu\right)^2}{\int_D \frac{\phi_n^2}{f}d\mu}\right)\left(\sum^{\infty}_{m = 1}\frac{\left(\int_D \psi_md\mu\right)^2}{\int_D \frac{\psi_m^2}{g}d\mu}\right).
\end{equation}

In particular the lesser side is finite because the integral $\int_D fgd\mu$ is finite. Since the families $\{\phi_n\}^{\infty}_{n = 1}, \{\psi_m\}^{\infty}_{m = 1}$ are mutually orthogonal on $D$ with respect to $\frac{1}{f}$ and $\frac{1}{g}$ respectively, and we have the Fourier expansions $f = \sum^{\infty}_{n = 1}c_n\phi_n$, $g = \sum^{\infty}_{m = 1}d_m\psi_m$, we can use the Parseval Identity \eqref{posParseval} twice to immediately conclude \eqref{prodIneq}. \end{proof}

\vspace{0.2 cm}

\begin{remark} Notice that we have

\begin{equation}\label{prodFormCritACSInt}
\int_D \frac{\phi_n^2\psi_m^2}{fg}d\mu \leq \sqrt{\int_D\frac{\phi_n^4}{f^2}d\mu\int_D\frac{\psi_m^4}{g^2}d\mu}
\end{equation}

as a corollary of \eqref{integralCS}. In this case, \eqref{prodFormCritA} follows if $\int_D \frac{\phi_n^4}{f^2}d\mu \leq \left(\int_D \frac{\phi_n^2}{f}d\mu\right)^2$ for all $n \in \N^+$ and $\int_D \frac{\psi_m^4}{g^2}d\mu \leq \left(\int_D \frac{\psi_m^2}{g}d\mu\right)^2$. This gives us a sense of what type of behavior is required for the individual functions $\{\phi_n\}^{\infty}_{n = 1}$ and $\{\psi_m\}^{\infty}_{m = 1}$. \end{remark}

\vspace{0.2 cm}

We notice that the proof we just completed not only utilizes \eqref{posParseval} twice, but it also serves to magnify the significance of the proofs given in Section \ref{integralCSProof} because the proofs rest on the same fundamental idea and setup. Moreover, observe that in this proof the mutual orthogonality of the families $\{\phi_n\}^{\infty}_{n = 1}$ and $\{\psi_m\}^{\infty}_{m = 1}$ was not invoked explicitly, but only implicitly in justifying the use of \eqref{posParseval}. 

\vspace{0.2 cm}
\section{Future Work}\label{future}

The potential extensions of the theory used to reinvent Parseval's Identity are abundant. In particular, one natural extension is to revert the Lebesgue integrals in \eqref{genParseval} to Riemann Integrals and use the formula to perform explicit calculations. The book \cite{Wei} provides some explicit choices for mutually orthogonal families of functions to serve this purpose. 

\vspace{0.2 cm}

There also exist more theoretical extensions of the work done that penetrate into other fields of mathematics. For instance, if one writes the mean-square deviation integral \eqref{meanSquareDeviationIntegral} in the form of a generic energy functional

\begin{equation}
\int_D G(f, \{\phi_n\}, N)d\mu, \label{meanSquareCoV}
\end{equation}

then utilization of techniques from calculus of variations can be used to address problems in the existence of minimizers. That is, for a choice of $f$ and $N$, which choice of mutually orthogonal functions $\{\phi_n\}^{N}_{n = 1}$ minimizes the value of \eqref{meanSquareCoV}, if such a family even exists. In a similar vein one may ask how the decay rate of \eqref{meanSquareDeviationIntegral} changes with respect to choice of function $f$, and alternatively if the square exponent is replaced with a higher power. 

\vspace{0.2 cm}

While inequality \eqref{prodIneq} was introduced primarily as an application of the material in Sections \ref{integralCSProof} and \ref{parsevalGen}, it is interesting in its own right. One may wish to explore necessary conditions for the inequality to hold, or alternatively the sharpness of the inequality.

\vspace{0.2 cm}

Finally, there is a problem of potential interest to numerical analysts. The measure-theoretic techniques of changing the variables and gradual construction of measurable sets used in Section \ref{integralCSProof} are likely reusable to find generalizations of the Trapezoid Rule Inequalities found in \cite{Dra, Nwa, Pac}. In particular we ask if similar bounds exist when the Riemann Integrals are replaced with Lebesgue Integrals over bounded measurable subsets of $\R$.

\vspace{0.2 cm}

\section{Acknowledgments}

The author wishes to thank Shlomo Ta'asan for providing feedback on the content and delivery of proofs presented in this note. He also wishes to thank Akanksha Kartik for proofreading early drafts of the manuscript. 

% BEFORE SUBMITTING YOUR MANUSCRIPT PLEASE COPY THE CONTENT OF THE BBL FILE THAT LATEX GENERATES HERE (IT WILL BE IN THE SAME DIRECTORY AS YOUR LATEX FILE) AND COMMENT THE PREVIOUS TWO LINES (\bibliographystyle and \bibliography), FOR EXAMPLE:

\end{document}